\newtheorem{theorem}{Theorem}
\theoremstyle{plain}
\newtheorem{definition}{Definition}
\newtheorem{lemma}{Lemma}
\numberwithin{equation}{section}
\begin{document}
\title[Some Hadamard-Type Inequalities]{On Some Hadamard-Type Inequalities
for Convex Functions On A Rectanguler Box }
\author{$^{\bigstar }$M. Emin \"{O}ZDEM\.{I}R}
\address{$^{\bigstar }$Atat\"{u}rk University, K. K. Education Faculty,
Department of Mathematics, 25240, Kampus, Erzurum, Turkey}
\email{emos@atauni.edu.tr}
\author{$^{\blacklozenge ,\clubsuit }$Ahmet Ocak AKDEM\.{I}R}
\address{$^{\blacklozenge }$A\u{g}r\i\ \.{I}brahim \c{C}e\c{c}en University,
Faculty of Science and Arts, Department of Mathematics, 04100, A\u{g}r\i ,
Turkey}
\email{ahmetakdemir@agri.edu.tr}
\date{September 2010}
\subjclass{Primary 26D15, Secondary 26A51}
\keywords{Convex, $\ $co-ordinated convex, Co-ordinates, Hadamard's
inequality\\
$^{\clubsuit }$Corresponding author}

\begin{abstract}
In this paper some Hadamard-type inequalities for convex functions of $3-$%
variables on a rectanguler box are given. We also define a mapping related
to convex functions on a rectanguler box.
\end{abstract}

\maketitle

\section{Introduction}

Let $f:I\subseteq 
\mathbb{R}
\rightarrow 
\mathbb{R}
$ be a convex function defined on the interval $I$ of real numbers and $a<b.$
The following inequality;%
\begin{equation*}
f\left( \frac{a+b}{2}\right) \leq \frac{1}{b-a}\dint\limits_{a}^{b}f(x)dx%
\leq \frac{f(a)+f(b)}{2}
\end{equation*}%
is known in the literature as Hadamard's inequality for convex mappings. The
history of\ this famous integral inequality begins with the papers of Ch.
Hermite \cite{HER} and J. Hadamard \cite{HA}. In recent years there have
been many extensions, generalizations, refinements and similar results of
Hadamard's inequality, we refer interest of readers to \cite{S1}-\cite{ZAB}.
On all of these, in \cite{SS}, Dragomir defined convex functions on the
co-ordinates as following;

Let us consider the bidimensional interval $\Delta :=\left[ a,b\right]
\times \left[ c,d\right] $ in $%
\mathbb{R}
^{2}$ with $a<b$ and $c<d.$ A function $f:\Delta \rightarrow 
\mathbb{R}
$ will be called convex on the co-ordinates if the partial mappings $f_{y}:%
\left[ a,b\right] \rightarrow 
\mathbb{R}
,$ $f_{y}(u)=f(u,y)$ and $f_{x}:\left[ c,d\right] \rightarrow 
\mathbb{R}
,$ $f_{x}(v)=f(x,v)$, are convex where defined for all $x\in \left[ a,b%
\right] $ and $y\in \left[ c,d\right] .$

Recall that the mapping $f:\Delta \rightarrow 
\mathbb{R}
$ is convex in $\Delta $ if the following inequality:%
\begin{equation*}
f(\lambda x+\left( 1-\lambda \right) z,\lambda y+\left( 1-\lambda \right)
w)\leq \lambda f(x,y)+\left( 1-\lambda \right) f(z,w)
\end{equation*}%
holds for all $(x,y),$ $(z,w)\in \Delta $ and $\lambda \in \left[ 0,1\right]
.$ Therefore, in \cite{SS}, Dragomir proved the following lemma and
established following inequalities for convex functions on the co-ordinates.

\begin{lemma}
Every convex mapping $f:\Delta \rightarrow 
\mathbb{R}
$ is convex on the co-ordinates, but the converse is not generally true.
\end{lemma}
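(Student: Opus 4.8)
The plan is to establish the statement in two parts: first the implication that a jointly convex map is convex on the co-ordinates, and then the failure of the converse via an explicit counterexample. For the forward implication, the key observation is that the definition of coordinated convexity is just the definition of joint convexity specialised to pairs of points sharing one coordinate. Thus I would fix $x\in[a,b]$ and look at the partial map $f_{x}(v)=f(x,v)$: given $v,w\in[c,d]$ and $\lambda\in[0,1]$, writing $f_{x}(\lambda v+(1-\lambda)w)=f\big(\lambda x+(1-\lambda)x,\ \lambda v+(1-\lambda)w\big)$ and applying the joint convexity inequality with $(x,v)$ and $(x,w)$ in place of $(x,y)$ and $(z,w)$ yields $f_{x}(\lambda v+(1-\lambda)w)\le\lambda f(x,v)+(1-\lambda)f(x,w)=\lambda f_{x}(v)+(1-\lambda)f_{x}(w)$. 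The same argument with the variables interchanged handles $f_{y}(u)=f(u,y)$, and since $x,y$ were arbitrary this is exactly coordinated convexity.

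For the converse I would produce a map that is convex on the co-ordinates but not jointly convex. The natural candidate is $f:[0,1]\times[0,1]\to\mathbb{R}$, $f(x,y)=xy$: each partial map $f_{y}(u)=uy$ and $f_{x}(v)=xv$ is affine, hence convex, so $f$ is coordinated convex. To see that $f$ is not jointly convex I would test the convexity inequality at $(1,0)$ and $(0,1)$ with $\lambda=\tfrac12$: the left-hand side is $f(\tfrac12,\tfrac12)=\tfrac14$, while the right-hand side is $\tfrac12 f(1,0)+\tfrac12 f(0,1)=0$, so the required inequality $\tfrac14\le 0$ fails.

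I expect no genuine obstacle here: the forward direction is a one-line specialisation of the hypothesis, and the only mildly creative step is selecting the counterexample for the converse, for which the bilinear map $xy$ serves on $[0,1]^{2}$ and, after an affine change of variables, on an arbitrary rectangle $[a,b]\times[c,d]$ (for instance $f(x,y)=\big(x-\tfrac{a+b}{2}\big)\big(y-\tfrac{c+d}{2}\big)$, which is affine in each variable separately but fails joint convexity at $(a,d)$ and $(b,c)$). One should only take care to present the counterexample on a genuine rectangle with $a<b$ and $c<d$ so that it fits the hypotheses of the lemma.
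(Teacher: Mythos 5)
Your proof is correct and follows essentially the same route as the source: the forward direction uses exactly the duplication trick $x=\lambda x+(1-\lambda)x$ that this paper employs in its proof of the three-variable analogue (Lemma 2), and the counterexample $f(x,y)=xy$ on $[0,1]^{2}$ is the standard one from Dragomir's paper \cite{SS}, which this paper cites for Lemma 1 without reproducing a proof. Both your verification that the partial maps are affine and your failure of joint convexity at $(1,0)$, $(0,1)$ with $\lambda=\tfrac12$ check out.
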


\begin{theorem}
Suppose that $f:\Delta :=\left[ a,b\right] \times \left[ c,d\right]
\rightarrow 
\mathbb{R}
$ is convex on the co-ordinates on $\Delta .$ Then one has the inequalities:%
\begin{eqnarray}
&&f\left( \frac{a+b}{2},\frac{c+d}{2}\right)  \notag \\
&\leq &\frac{1}{2}\left[ \frac{1}{b-a}\dint\limits_{a}^{b}f\left( x,\frac{c+d%
}{2}\right) dx+\frac{1}{d-c}\dint\limits_{c}^{d}f\left( \frac{a+b}{2}%
,y\right) dy\right]  \notag \\
&\leq &\frac{1}{\left( b-a\right) \left( d-c\right) }\dint\limits_{a}^{b}%
\dint\limits_{c}^{d}f(x,y)dxdy  \label{1.1} \\
&\leq &\frac{1}{4}\left[ \frac{1}{b-a}\dint\limits_{a}^{b}f\left( x,c\right)
dx+\frac{1}{b-a}\dint\limits_{a}^{b}f\left( x,d\right) dx\right.  \notag \\
&&\left. +\frac{1}{d-c}\dint\limits_{c}^{d}f\left( a,y\right) dy+\frac{1}{d-c%
}\dint\limits_{c}^{d}f\left( b,y\right) dy\right]  \notag \\
&\leq &\frac{f(a,c)+f(a,d)+f(b,c)+f(b,d)}{4}  \notag
\end{eqnarray}%
The above inequalities are sharp.
\end{theorem}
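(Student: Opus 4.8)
The plan is to derive the whole chain from the one--variable Hadamard inequality applied to the partial mappings, which are convex precisely because $f$ is convex on the co-ordinates, and then to integrate out the remaining variable. Throughout, for a convex $g:[u,v]\to\mathbb{R}$ I will use
\[
g\!\left(\frac{u+v}{2}\right)\le\frac{1}{v-u}\int_u^v g(t)\,dt\le\frac{g(u)+g(v)}{2},
\]
referring to its two halves as the left and right Hadamard inequalities.

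First I would prove the top inequality. Applying the left Hadamard inequality to the convex maps $x\mapsto f\!\left(x,\frac{c+d}{2}\right)$ on $[a,b]$ and $y\mapsto f\!\left(\frac{a+b}{2},y\right)$ on $[c,d]$ gives
\[
f\!\left(\frac{a+b}{2},\frac{c+d}{2}\right)\le\frac{1}{b-a}\int_a^b f\!\left(x,\frac{c+d}{2}\right)dx,\qquad f\!\left(\frac{a+b}{2},\frac{c+d}{2}\right)\le\frac{1}{d-c}\int_c^d f\!\left(\frac{a+b}{2},y\right)dy,
\]
and averaging the two yields line one. For the passage from line one to line two, I would apply the left Hadamard inequality to $y\mapsto f(x,y)$ for each fixed $x$, obtaining $f\!\left(x,\frac{c+d}{2}\right)\le\frac{1}{d-c}\int_c^d f(x,y)\,dy$, then integrate in $x$ over $[a,b]$ and divide by $b-a$; symmetrically, apply it to $x\mapsto f(x,y)$, integrate in $y$, and divide by $d-c$. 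Adding these two and dividing by $2$ produces the second inequality, where one also uses that the two iterated integrals of $f$ over $\Delta$ coincide.

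The remaining two inequalities are dual to the first two and use the right Hadamard inequality. For line three, apply it to $y\mapsto f(x,y)$ and integrate in $x$, and to $x\mapsto f(x,y)$ and integrate in $y$, then add and halve. For the final line, apply the right Hadamard inequality to the four edge maps $x\mapsto f(x,c)$ and $x\mapsto f(x,d)$ on $[a,b]$ and $y\mapsto f(a,y)$ and $y\mapsto f(b,y)$ on $[c,d]$; summing the four resulting bounds, each vertex value $f(a,c),f(a,d),f(b,c),f(b,d)$ occurs exactly twice, and dividing by $4$ gives $\frac{f(a,c)+f(a,d)+f(b,c)+f(b,d)}{4}$.

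For sharpness, note that if $f$ is affine on $\Delta$ (in particular constant) then every invocation of the one--variable Hadamard inequality above is an equality, so all five quantities in the chain coincide; hence none of the inequalities can be improved. I do not expect a genuine obstacle here: the argument is essentially a bookkeeping exercise in combining one--dimensional estimates, and the only point needing a word of justification is the interchange in the order of integration in the middle step, which is legitimate since a function convex on the co-ordinates is continuous along every line parallel to the axes and the relevant integrals exist, so Fubini's theorem applies.
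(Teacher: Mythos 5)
Your argument is correct and complete: every link in the chain follows from the one-variable Hadamard inequality applied to the convex partial mappings, followed by integration in the remaining variable, and your bookkeeping checks out (in particular the count that each vertex value occurs exactly twice in the final step, the Fubini interchange in the middle step, and the observation that an affine or constant function turns the whole chain into equalities, which settles sharpness). Note that the paper does not itself prove this statement --- it quotes it from Dragomir \cite{SS} --- but your proof is the standard one, and it is exactly the one-dimension-lower analogue of the strategy the authors use for their three-variable version (Theorem 3), so the approaches coincide.
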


In \cite{SS}, Dragomir considered a mapping which closely connected with
above inequalities and established main properties of this mapping as
following:

Now, for a mapping $f:\Delta :=\left[ a,b\right] \times \left[ c,d\right]
\rightarrow 
\mathbb{R}
$ is convex on the co-ordinates on $\Delta ,$ we can define the mapping $H:%
\left[ 0,1\right] ^{2}\rightarrow 
\mathbb{R}
,$%
\begin{equation*}
H(t,s):=\frac{1}{\left( b-a\right) \left( d-c\right) }\dint\limits_{a}^{b}%
\dint\limits_{c}^{d}f\left( tx+(1-t)\frac{a+b}{2},sy+(1-s)\frac{c+d}{2}%
\right) dxdy
\end{equation*}

\begin{theorem}
Suppose that $f:\Delta \subset 
\mathbb{R}
^{2}\rightarrow 
\mathbb{R}
$ is convex on the co-ordinates on $\Delta =\left[ a,b\right] \times \left[
c,d\right] .$ Then:

(i) The mapping $H$ is convex on the co-ordinates on $\left[ 0,1\right]
^{2}. $

(ii) We have the bounds%
\begin{equation*}
\sup_{(t,s)\in \left[ 0,1\right] ^{2}}H(t,s)=\frac{1}{\left( b-a\right)
\left( d-c\right) }\dint\limits_{a}^{b}\dint\limits_{c}^{d}f\left(
x,y\right) dxdy=H(1,1)
\end{equation*}%
\begin{equation*}
\inf_{(t,s)\in \left[ 0,1\right] ^{2}}H(t,s)=f\left( \frac{a+b}{2},\frac{c+d%
}{2}\right) =H(0,0)
\end{equation*}

(iii) The mapping $H$ is monotonic nondecreasing on the co-ordinates.
\end{theorem}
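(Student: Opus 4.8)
The plan is to handle the three parts in order, using throughout the observation that as soon as only one of the two variables $t,s$ is moved, the inner integrand slides along a single coordinate direction, so that co-ordinate convexity of $f$ (rather than joint convexity) is precisely what is available and precisely what is needed.

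For (i), fix $s\in[0,1]$ and take $t_{1},t_{2}\in[0,1]$ and $\lambda\in[0,1]$. First I would record $1-(\lambda t_{1}+(1-\lambda)t_{2})=\lambda(1-t_{1})+(1-\lambda)(1-t_{2})$, so that
\[
(\lambda t_{1}+(1-\lambda)t_{2})x+\bigl(1-(\lambda t_{1}+(1-\lambda)t_{2})\bigr)\frac{a+b}{2}=\lambda\Bigl(t_{1}x+(1-t_{1})\frac{a+b}{2}\Bigr)+(1-\lambda)\Bigl(t_{2}x+(1-t_{2})\frac{a+b}{2}\Bigr),
\]
while the second argument $sy+(1-s)\frac{c+d}{2}$ does not involve $\lambda$. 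Applying convexity of the partial map $u\mapsto f\bigl(u,sy+(1-s)\frac{c+d}{2}\bigr)$ under the integral sign and then integrating in $x$ and $y$ yields $H(\lambda t_{1}+(1-\lambda)t_{2},s)\leq\lambda H(t_{1},s)+(1-\lambda)H(t_{2},s)$. The argument with $t$ held fixed and $s$ varying is identical, using convexity of the other partial map; together these give (i).

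For (ii) and (iii) the engine is a symmetry of $H$. Substituting $x\mapsto a+b-x$ on $[a,b]$, which maps the interval onto itself, turns $tx+(1-t)\frac{a+b}{2}=\frac{a+b}{2}+t\bigl(x-\frac{a+b}{2}\bigr)$ into $\frac{a+b}{2}-t\bigl(x-\frac{a+b}{2}\bigr)$; performing the analogous substitution $y\mapsto c+d-y$ shows that the defining formula for $H$ is unchanged under $t\mapsto-t$ and under $s\mapsto-s$. Hence $H$ extends, by the very same formula, to a function on $[-1,1]^{2}$ that is even in each variable separately and, by the computation of (i) verbatim (which never used $t,s\geq 0$ and still evaluates $f$ only on $\Delta$), is still convex on the co-ordinates of $[-1,1]^{2}$. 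I would then invoke the elementary one-variable fact that an even convex function $\varphi$ on $[-1,1]$ is nondecreasing on $[0,1]$: for $0\leq t_{1}<t_{2}\leq 1$, writing $t_{1}=\alpha(-t_{2})+(1-\alpha)t_{2}$ with $\alpha=\frac{1}{2}(1-t_{1}/t_{2})\in[0,\frac{1}{2}]$ gives $\varphi(t_{1})\leq\alpha\varphi(-t_{2})+(1-\alpha)\varphi(t_{2})=\varphi(t_{2})$, so that $\varphi$ attains its minimum on $[0,1]$ at $0$ and its maximum at $1$. Applying this in $t$ with $s$ fixed, and then applying it to $s\mapsto H(0,s)$ and to $s\mapsto H(1,s)$, gives (iii) at once as well as $\inf_{[0,1]^{2}}H=H(0,0)$ and $\sup_{[0,1]^{2}}H=H(1,1)$. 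Finally I would evaluate the corners: setting $t=s=1$ makes the arguments of $f$ equal to $x$ and $y$, so $H(1,1)=\frac{1}{(b-a)(d-c)}\int_{a}^{b}\int_{c}^{d}f(x,y)\,dx\,dy$, while $t=s=0$ makes the integrand the constant $f\bigl(\frac{a+b}{2},\frac{c+d}{2}\bigr)$, so $H(0,0)=f\bigl(\frac{a+b}{2},\frac{c+d}{2}\bigr)$; this completes (ii).

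The routine parts are the affine bookkeeping in (i) and the two evaluations at the corners. The step requiring the most care, and the only genuinely conceptual point, is the symmetry reduction: one must check that co-ordinate convexity of $f$ really does suffice, i.e. that at no stage are $t$ and $s$ moved simultaneously, and that the substitutions $x\mapsto a+b-x$ and $y\mapsto c+d-y$ are legitimate precisely because $[a,b]$ and $[c,d]$ are symmetric about the midpoints $\frac{a+b}{2}$ and $\frac{c+d}{2}$ appearing in the definition of $H$. Once the extended function on $[-1,1]^{2}$ is in hand, parts (ii) and (iii) follow immediately from the one-dimensional fact that an even convex function is monotone nondecreasing away from the origin.
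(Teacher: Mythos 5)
Your proof is correct. Part (i) is essentially the argument the paper itself uses (in its proof of the three--variable analogue, Theorem~4, where the same affine identity is written out for a three--point convex combination). For (ii) and (iii), however, you take a genuinely different route. The paper does not reprove this two--variable statement --- it quotes it from Dragomir --- but its proof of the three--variable analogue gets the lower bound $H\geq f\left(\frac{a+b}{2},\frac{c+d}{2}\right)$ from Jensen's integral inequality, gets the upper bound by expanding $H$ as a convex combination of $2^{n}$ iterated--integral terms and estimating each one through the chain of Hadamard--type inequalities of the main theorem, and proves (iii) by first asserting $H(t,s)\geq H(0,s)$ ``by Hadamard's inequality'' and then invoking the increasing--slope property of convex functions. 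Your symmetry argument --- the substitution $x\mapsto a+b-x$ (legitimate exactly because $[a,b]$ is symmetric about the midpoint used in the definition of $H$) shows that $H$ extends by its own formula to an even, coordinate--wise convex function on $[-1,1]^{2}$, and an even convex function of one variable is nondecreasing on $[0,1]$ --- replaces all three ingredients by one elementary observation, proves (iii) first, and then reads off (ii) from monotonicity plus the two corner evaluations. In particular it actually \emph{proves} the step $H(t,s)\geq H(0,s)$ that the paper only asserts, and it avoids both Jensen's inequality and the Hadamard chain. What the paper's route buys in exchange is that the bounds are exhibited as consequences of the displayed Hadamard--type inequalities, which is the thematic point of that development; your route is shorter and more self--contained.
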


For similar results see the papers \cite{LA}-\cite{HTS}.

In this paper, we defined convex functions on a rectanguler box and proved a
theorem similar to Theorem 1 on a rectanguler box, we also established a
mapping related to convex functions on a rectanguler box.

\section{Main Result}

Firstly we can define the convex function on a rectanguler box, as following:

\begin{definition}
Let us consider the rectangular box $G=[a,b]\times \lbrack c,d]\times
\lbrack e,f]$ in $%
\mathbb{R}
^{3}$. The mapping $f:G\rightarrow 
\mathbb{R}
$ is convex on $G$ if 
\begin{equation*}
f(tx+(1-t)u,ty+(1-t)v,tz+(1-t)w)\leq tf(x,y,z)+(1-t)f(u,v,w)
\end{equation*}%
holds for all $(x,y,z),$ $(u,v,w)\in G$ and $t\in \lbrack 0,1].$
\end{definition}

A function $f:G\rightarrow 
\mathbb{R}
$ is convex on $G$ is called co-ordinated convex on $G$ if the partial
mappings%
\begin{equation*}
f_{x}:[c,d]\times \lbrack e,f]\rightarrow 
\mathbb{R}
,\text{ \ \ \ }f_{x}(y,z)=f(x,y,z),\text{ }x\in \lbrack a,b]
\end{equation*}%
\begin{equation*}
f_{y}:[a,b]\times \lbrack e,f]\rightarrow 
\mathbb{R}
,\text{ \ \ \ }f_{y}(x,z)=f(x,y,z),\text{ }y\in \lbrack c,d]
\end{equation*}%
\begin{equation*}
f_{z}:[a,b]\times \lbrack c,d]\rightarrow 
\mathbb{R}
,\text{ \ \ \ }f_{z}(x,y)=f(x,y,z),\text{ }z\in \lbrack e,f]
\end{equation*}%
are convex for all $(y,z)\in \lbrack c,d]\times \lbrack e,f],$ $(x,z)\in
\lbrack a,b]\times \lbrack e,f],$ $(x,y)\in \lbrack a,b]\times \lbrack c,d].$

In order to prove our main theorem we need the following lemma.

\begin{lemma}
Every co-ordinated convex mapping $f:G\rightarrow 
\mathbb{R}
$ is convex for triple co-ordinates, where $G=[a,b]\times \lbrack c,d]\times
\lbrack e,f].$
\end{lemma}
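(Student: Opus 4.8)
The plan is to reduce the statement to the two-dimensional situation already recorded in Lemma 1. First I would make the conclusion precise: ``convex for triple co-ordinates'' should mean the three-dimensional analogue of Dragomir's convexity on the co-ordinates, namely that for every fixed choice of two of the three arguments the remaining one-variable partial function is convex --- equivalently, that the restriction of $f$ to any segment of $G$ parallel to a coordinate axis is a convex function of one real variable. Note that this is a weaker requirement than the co-ordinate convexity of $f$ on $G$ defined above, in which the full two-variable partial maps $f_{x},f_{y},f_{z}$ are assumed convex.

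Next I would fix $z_{0}\in[e,f]$ and consider the partial mapping $f_{z_{0}}:[a,b]\times[c,d]\to\mathbb{R}$, $f_{z_{0}}(x,y)=f(x,y,z_{0})$, which is convex on the bidimensional interval $[a,b]\times[c,d]$ by hypothesis. By Lemma 1, $f_{z_{0}}$ is then convex on the co-ordinates, so $x\mapsto f(x,y_{0},z_{0})$ is convex on $[a,b]$ for each $y_{0}\in[c,d]$ and $y\mapsto f(x_{0},y,z_{0})$ is convex on $[c,d]$ for each $x_{0}\in[a,b]$. Since $z_{0}$ was arbitrary, $f$ is convex in its first and in its second argument whenever the other two are frozen.

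I would then run the identical argument on the partial mapping $f_{x_{0}}:[c,d]\times[e,f]\to\mathbb{R}$, $f_{x_{0}}(y,z)=f(x_{0},y,z)$, which is convex on $[c,d]\times[e,f]$ by hypothesis; Lemma 1 now delivers the convexity of $z\mapsto f(x_{0},y_{0},z)$ on $[e,f]$ for all $(x_{0},y_{0})$. (The third partial mapping $f_{y_{0}}$ merely re-proves the first and third cases and may be skipped.) Assembling the three conclusions gives convexity of $f$ along every axis-parallel segment of $G$, which is the assertion.

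The hard part, such as it is, is only to pin down the meaning of the term and to keep straight which two-dimensional slice yields which axis-direction convexity; there is no analytic difficulty. The single fact actually used --- that a function which is jointly convex on a two-dimensional interval is convex along each coordinate direction --- is precisely the content of Lemma 1 (and in any event follows in one line by precomposing with an affine path $t\mapsto(\,\cdot\,,\,(1-t)y_{1}+ty_{2}\,)$ and invoking the definition of convexity). Mirroring Dragomir's remark in the planar case, I would also note that no converse holds: co-ordinate convexity on $G$ does not entail the joint convexity of Definition 1.
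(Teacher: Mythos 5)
Your argument is internally sound for the statement you chose to prove, but that is not the statement the paper proves, and the mismatch matters. The lemma's wording is admittedly garbled, but the paper's own proof begins ``Suppose that $f$ is convex on $G$'' and ends by concluding that the \emph{two-variable} partial maps $f_{x}(y,z)=f(x,y,z)$, $f_{y}(x,z)$ and $f_{z}(x,y)$ are each convex on their respective bidimensional rectangles; that is, the intended lemma is the direct analogue of Lemma 1: joint convexity on $G$ in the sense of Definition 1 implies co-ordinated convexity. You have inverted this. You take co-ordinated convexity (convexity of the two-variable partial maps) as the \emph{hypothesis} and deduce only the weaker conclusion that $f$ is convex along axis-parallel segments. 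That implication is true, but it cannot do the job the lemma is required to do: in the proof of Theorem 3 the lemma is invoked precisely to pass from ``$f$ is convex on $G$'' to ``$g_{x},g_{y},g_{z}$ are convex two-variable maps,'' so that the planar inequality (\ref{1.1}) can be applied to each slice. Your version neither starts from the hypothesis actually available there (joint convexity) nor delivers the conclusion needed there (convexity of the two-variable slices).

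The intended proof is also more elementary than yours and does not use Lemma 1 at all. Writing $x=tx+(1-t)x$, one has
\begin{equation*}
f_{x}\bigl(ty_{1}+(1-t)y_{2},tz_{1}+(1-t)z_{2}\bigr)=f\bigl(tx+(1-t)x,ty_{1}+(1-t)y_{2},tz_{1}+(1-t)z_{2}\bigr)\leq tf_{x}(y_{1},z_{1})+(1-t)f_{x}(y_{2},z_{2})
\end{equation*}
directly from Definition 1, and the same device handles $f_{y}$ and $f_{z}$. To repair your write-up, replace the hypothesis by joint convexity of $f$ on $G$, strengthen the conclusion to convexity of the two-variable partial maps, and run the displayed computation; the reduction to axis-parallel segments via Lemma 1 can then be discarded.
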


\begin{proof}
Suppose that $f:G=[a,b]\times \lbrack c,d]\times \lbrack e,f]\rightarrow 
\mathbb{R}
$ is convex on $G.$ Recall the function 
\begin{equation*}
f_{x}:[c,d]\times \lbrack e,f]\rightarrow 
\mathbb{R}
,\text{ \ \ \ }f_{x}(y,z)=f(x,y,z),\text{ \ }x\in \lbrack a,b]
\end{equation*}

Then for $t\in \lbrack 0,1]$ and $(y_{1},z_{1}),$ $(y_{2},z_{2})\in \lbrack
c,d]\times \lbrack e,f],$ we have%
\begin{eqnarray*}
f_{x}(ty_{1}+(1-t)y_{2},tz_{1}+(1-t)z_{2})
&=&f(x,ty_{1}+(1-t)y_{2},tz_{1}+(1-t)z_{2}) \\
&=&f(tx+(1-t)x,ty_{1}+(1-t)y_{2},tz_{1}+(1-t)z_{2}) \\
&\leq &tf(x,y_{1},z_{1})+(1-t)f(x,y_{2},z_{2}) \\
&=&tf_{x}(y_{1},z_{1})+(1-t)f_{x}(y_{2},z_{2})
\end{eqnarray*}%
Therefore $f_{x}(y,z)=f(x,y,z)$ is convex on $[c,d]\times \lbrack e,f],$ for
all $x\in \lbrack a,b].$ The\ fact\ that\ $f_{y}:[a,b]\times \lbrack
e,f]\rightarrow 
\mathbb{R}
,$ $f_{y}(x,z)=f(x,y,z)$ is convex on $[a,b]\times \lbrack e,f]$ for all $%
y\in \lbrack c,d]$ and $f_{z}:[a,b]\times \lbrack c,d]\rightarrow 
\mathbb{R}
,$ $f_{z}(x,y)=f(x,y,z)$ is also convex on $[a,b]\times \lbrack c,d]$ for
all $z\in \lbrack e,f]$ goes likewise and we shall omit the details.
\end{proof}

\begin{theorem}
Suppose that $f:G=[a,b]\times \lbrack c,d]\times \lbrack e,f]\rightarrow 
\mathbb{R}
$ is convex on $G.$ Then one has the inequalities:%
\begin{eqnarray}
&&f\left( \frac{a+b}{2},\frac{c+d}{2},\frac{e+f}{2}\right)  \label{2.1} \\
&\leq &\frac{1}{3\left( b-a\right) }\dint\limits_{a}^{b}f\left( x,\frac{c+d}{%
2},\frac{e+f}{2}\right) dx+\frac{1}{3\left( d-c\right) }\dint%
\limits_{c}^{d}f\left( \frac{a+b}{2},y,\frac{e+f}{2}\right) dy  \notag \\
&&+\frac{1}{3\left( f-e\right) }\dint\limits_{e}^{f}f\left( \frac{a+b}{2},%
\frac{c+d}{2},z\right) dz  \notag \\
&\leq &\frac{1}{\left( b-a\right) \left( d-c\right) \left( f-e\right) }%
\diiint\nolimits_{G}f(x,y,z)dydzdx  \notag \\
&\leq &\frac{1}{6}\left[ \frac{1}{\left( b-a\right) \left( d-c\right) }%
\diint\nolimits_{\Delta _{1}}f\left( x,y,e\right) dydx+\frac{1}{\left(
b-a\right) \left( d-c\right) }\diint\nolimits_{\Delta _{1}}f\left(
x,y,f\right) dydx\right.  \notag \\
&&\left. +\frac{1}{\left( b-a\right) (f-e)}\diint\nolimits_{\Delta
_{2}}f(x,c,z)dzdx+\frac{1}{\left( b-a\right) (f-e)}\diint\nolimits_{\Delta
_{2}}f\left( x,d,z\right) dzdx\right.  \notag \\
&&\left. +\frac{1}{\left( d-c\right) (f-e)}\diint\nolimits_{\Delta
_{3}}f(a,y,z)dzdy+\frac{1}{\left( d-c\right) (f-e)}\diint\nolimits_{\Delta
_{3}}f\left( b,y,z\right) dzdy\right]  \notag \\
&\leq &\frac{f(a,c,e)+f(a,d,e)+f(b,c,e)+f(b,d,e)}{8}  \notag \\
&&+\frac{f(a,c,f)+f(a,d,f)+f(b,c,f)+f(b,d,f)}{8}  \notag
\end{eqnarray}%
where $\Delta _{1}=[a,b]\times \lbrack c,d],$ $\Delta _{2}=[a,b]\times
\lbrack e,f]$ and $\Delta _{3}=[c,d]\times \lbrack e,f].$
\end{theorem}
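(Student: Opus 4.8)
The plan is to establish the four-link chain in \eqref{2.1} one inequality at a time, each link being reduced to a Hadamard-type inequality already in hand: the classical Hermite--Hadamard inequality on an interval, and Dragomir's Theorem~1 on a rectangle, both applied to coordinate partial mappings of $f$. Those partial mappings are genuinely convex --- on a segment, resp.\ on a face --- by the computation carried out in the proof of Lemma~2, so each such application is legitimate. For brevity write $A_0\le A_1\le A_2\le A_3\le A_4$ for the five successive members of \eqref{2.1} (the centre value; the weighted sum of three single integrals; the normalized triple integral; the weighted sum of the six face integrals; the weighted sum of the eight vertex values). The uniform device in every step is: freeze all but one or two of the variables, apply the lower-dimensional inequality to the resulting partial map, integrate that inequality over the frozen variables, and finally add the three symmetric instances; the weights $\tfrac13,\tfrac16,\tfrac18$ in the statement are precisely what makes the three instances combine into a single clean inequality.

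For $A_0\le A_1$ I would apply the left half of the one-variable Hadamard inequality to the convex function $x\mapsto f\!\left(x,\tfrac{c+d}{2},\tfrac{e+f}{2}\right)$ on $[a,b]$, and likewise in the $y$- and $z$-directions; writing $A_0=\tfrac13(A_0+A_0+A_0)$ and bounding the three copies by these three single integrals gives the claim. For $A_1\le A_2$, fix $x\in[a,b]$: the partial map $f_x(y,z)=f(x,y,z)$ is convex on $\Delta_3$, so the inequality between the first and third members of Dragomir's Theorem~1 gives $f\!\left(x,\tfrac{c+d}{2},\tfrac{e+f}{2}\right)\le \tfrac{1}{(d-c)(f-e)}\int_c^d\int_e^f f(x,y,z)\,dz\,dy$; dividing by $3(b-a)$ and integrating in $x$ bounds the first term of $A_1$ by $\tfrac13 A_2$, and the other two terms are handled identically with $f_y$ and $f_z$. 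Summing the three yields $A_1\le A_2$.

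For $A_2\le A_3$, fix $(x,y)\in\Delta_1$: the segment restriction $z\mapsto f(x,y,z)$ is convex on $[e,f]$, so the right half of the one-variable Hadamard inequality gives $\tfrac{1}{f-e}\int_e^f f(x,y,z)\,dz\le \tfrac12\bigl(f(x,y,e)+f(x,y,f)\bigr)$; integrating over $\Delta_1$ and dividing by $(b-a)(d-c)$ bounds $A_2$ by half the sum of the two $\Delta_1$-face integrals in $A_3$. The analogous restrictions in the $y$- and $x$-directions, integrated over $\Delta_2$ and $\Delta_3$, bound $A_2$ by half the sum of the two $\Delta_2$-face integrals, resp.\ the two $\Delta_3$-face integrals; adding the three inequalities gives $3A_2$ on the left and $\tfrac12$ times the six face integrals, i.e.\ $3A_3$, on the right. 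For $A_3\le A_4$, each of the six face integrals is the normalized double integral of a two-variable convex function over a rectangle --- e.g.\ $(x,y)\mapsto f(x,y,e)$ on $\Delta_1$, convex by Lemma~2 --- so the rightmost estimate of Dragomir's Theorem~1 bounds it by $\tfrac14$ of the sum of the values of $f$ at the four corners of that face; summing over the six faces, each of the eight vertices of $G$ is counted exactly three times (once for each face through it), so the total right-hand side is $\tfrac34\sum_{\mathrm{vertices}}f$, which compared with $6A_3$ on the left gives $A_3\le\tfrac18\sum_{\mathrm{vertices}}f=A_4$.

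There is no genuine analytic obstacle: each step is a verbatim application of a previously proved Hadamard-type inequality to a coordinate partial map whose convexity is supplied by Lemma~2, followed only by monotonicity of the integral. The one point demanding care --- and the only place a slip is plausible --- is the bookkeeping of the normalizing constants through each ``integrate, then average three ways'' step, together with the symmetric counting of faces and vertices in the last step that produces the factors $\tfrac16$ and $\tfrac18$; matching those weights is exactly what makes the four links of the chain fit together.
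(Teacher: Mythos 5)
Your proof is correct and follows essentially the same strategy as the paper: each link of the chain is obtained by applying a lower-dimensional Hadamard inequality (the one-variable one, or Theorem~1 on a rectangle) to a coordinate partial map, integrating over the frozen variables, and summing the three symmetric instances to produce the weights $\frac{1}{3}$, $\frac{1}{6}$, $\frac{1}{8}$. The only notable difference is the last link, where you bound each face integral directly by the average of its four corner values and observe that each vertex lies on three faces; this is cleaner than the paper's detour through edge integrals and, unlike the paper's own final display (which erroneously shows denominators $4$), lands exactly on the stated constant $\frac{1}{8}$.
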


\begin{proof}
Since $f:G=[a,b]\times \lbrack c,d]\times \lbrack e,f]\rightarrow 
\mathbb{R}
$ is convex on $G,$ we can write the partial mappings%
\begin{equation*}
g_{x}:[c,d]\times \lbrack e,f]\rightarrow 
\mathbb{R}
,\text{ \ \ \ }g_{x}(y,z)=f(x,y,z)
\end{equation*}%
\begin{equation*}
g_{y}:[a,b]\times \lbrack e,f]\rightarrow 
\mathbb{R}
,\text{ \ \ \ }g_{y}(x,z)=f(x,y,z)
\end{equation*}%
\begin{equation*}
g_{z}:[a,b]\times \lbrack c,d]\rightarrow 
\mathbb{R}
,\text{ \ \ \ }g_{z}(x,y)=f(x,y,z)
\end{equation*}%
are convex for all $(y,z)\in \lbrack c,d]\times \lbrack e,f],$ $(x,z)\in
\lbrack a,b]\times \lbrack e,f],$ $(x,y)\in \lbrack a,b]\times \lbrack c,d].$
Then by the inequality of (\ref{1.1}), we have%
\begin{eqnarray*}
&&g_{x}\left( \frac{c+d}{2},\frac{e+f}{2}\right) \\
&\leq &\frac{1}{\left( d-c\right) \left( f-e\right) }\dint\limits_{c}^{d}%
\dint\limits_{e}^{f}g_{x}(y,z)dydz
\end{eqnarray*}%
That\ is%
\begin{eqnarray*}
&&f\left( x,\frac{c+d}{2},\frac{e+f}{2}\right) \\
&\leq &\frac{1}{\left( d-c\right) \left( f-e\right) }\dint\limits_{c}^{d}%
\dint\limits_{e}^{f}f(x,y,z)dydz \\
&=&\frac{1}{\left( d-c\right) \left( f-e\right) }\diint\nolimits_{\Delta
_{3}}f(x,y,z)dydz
\end{eqnarray*}%
Integrating this inequality on $\left[ a,b\right] $ respect to $x$ and
dividing both sides of inequalities $(b-a)$, we have%
\begin{eqnarray}
&&\frac{1}{b-a}\dint\limits_{a}^{b}f\left( x,\frac{c+d}{2},\frac{e+f}{2}%
\right) dx  \label{2.2} \\
&\leq &\frac{1}{\left( b-a\right) \left( d-c\right) \left( f-e\right) }%
\dint\limits_{a}^{b}\dint\limits_{c}^{d}\dint\limits_{e}^{f}f(x,y,z)dydzdx 
\notag \\
&=&\frac{1}{\left( b-a\right) \left( d-c\right) \left( f-e\right) }%
\diiint\nolimits_{G}f(x,y,z)dydzdx  \notag
\end{eqnarray}%
By\ a similar argument, we have%
\begin{eqnarray}
&&\frac{1}{d-c}\dint\limits_{c}^{d}f\left( \frac{a+b}{2},y,\frac{e+f}{2}%
\right) dy  \label{2.3} \\
&\leq &\frac{1}{\left( b-a\right) \left( d-c\right) \left( f-e\right) }%
\dint\limits_{a}^{b}\dint\limits_{c}^{d}\dint\limits_{e}^{f}f(x,y,z)dydzdx 
\notag \\
&=&\frac{1}{\left( b-a\right) \left( d-c\right) \left( f-e\right) }%
\diiint\nolimits_{G}f(x,y,z)dydzdx  \notag
\end{eqnarray}%
\begin{eqnarray}
&&\frac{1}{f-e}\dint\limits_{e}^{f}f\left( \frac{a+b}{2},\frac{c+d}{2}%
,z\right) dz  \label{2.4} \\
&\leq &\frac{1}{\left( b-a\right) \left( d-c\right) \left( f-e\right) }%
\dint\limits_{a}^{b}\dint\limits_{c}^{d}\dint\limits_{e}^{f}f(x,y,z)dydzdx 
\notag \\
&=&\frac{1}{\left( b-a\right) \left( d-c\right) \left( f-e\right) }%
\diiint\nolimits_{G}f(x,y,z)dydzdx  \notag
\end{eqnarray}%
By\ addition\ (\ref{2.2}), (\ref{2.3}) and (\ref{2.4}), we get the second
inequality of (\ref{2.1})%
\begin{eqnarray*}
&&\frac{1}{3\left( b-a\right) }\dint\limits_{a}^{b}f\left( x,\frac{c+d}{2},%
\frac{e+f}{2}\right) dx+\frac{1}{3\left( d-c\right) }\dint\limits_{c}^{d}f%
\left( \frac{a+b}{2},y,\frac{e+f}{2}\right) dy \\
&&+\frac{1}{3\left( f-e\right) }\dint\limits_{e}^{f}f\left( \frac{a+b}{2},%
\frac{c+d}{2},z\right) dz \\
&\leq &\frac{1}{\left( b-a\right) \left( d-c\right) \left( f-e\right) }%
\diiint\nolimits_{G}f(x,y,z)dydzdx
\end{eqnarray*}%
By the first inequality of (\ref{1.1}), we have%
\begin{eqnarray}
&&f\left( \frac{a+b}{2},\frac{c+d}{2},\frac{e+f}{2}\right)  \label{2.5} \\
&\leq &\frac{1}{2}\left[ \frac{1}{d-c}\dint\limits_{c}^{d}f\left( \frac{a+b}{%
2},y,\frac{e+f}{2}\right) dy+\frac{1}{f-e}\dint\limits_{e}^{f}f\left( \frac{%
a+b}{2},\frac{c+d}{2},z\right) dz\right]  \notag
\end{eqnarray}%
\begin{eqnarray}
&&f\left( \frac{a+b}{2},\frac{c+d}{2},\frac{e+f}{2}\right)  \label{2.6} \\
&\leq &\frac{1}{2}\left[ \frac{1}{b-a}\dint\limits_{a}^{b}f\left( x,\frac{c+d%
}{2},\frac{e+f}{2}\right) dx+\frac{1}{f-e}\dint\limits_{e}^{f}f\left( \frac{%
a+b}{2},\frac{c+d}{2},z\right) dz\right]  \notag
\end{eqnarray}%
\begin{eqnarray}
&&f\left( \frac{a+b}{2},\frac{c+d}{2},\frac{e+f}{2}\right)  \label{2.7} \\
&\leq &\frac{1}{2}\left[ \frac{1}{b-a}\dint\limits_{a}^{b}f\left( x,\frac{c+d%
}{2},\frac{e+f}{2}\right) dx+\frac{1}{d-c}\dint\limits_{c}^{d}f\left( \frac{%
a+b}{2},y,\frac{e+f}{2}\right) dy\right]  \notag
\end{eqnarray}%
Summing the inequalities (\ref{2.5})-(\ref{2.7}), we get the first
inequality of (\ref{2.1})%
\begin{eqnarray*}
&&f\left( \frac{a+b}{2},\frac{c+d}{2},\frac{e+f}{2}\right) \\
&\leq &\frac{1}{3}\left[ \frac{1}{d-c}\dint\limits_{c}^{d}f\left( \frac{a+b}{%
2},y,\frac{e+f}{2}\right) dy+\frac{1}{b-a}\dint\limits_{a}^{b}f\left( x,%
\frac{c+d}{2},\frac{e+f}{2}\right) dx\right. \\
&&\left. +\frac{1}{f-e}\dint\limits_{e}^{f}f\left( \frac{a+b}{2},\frac{c+d}{2%
},z\right) dz\right]
\end{eqnarray*}%
Finally, by the third and last inequalities of (\ref{1.1}), we have%
\begin{eqnarray}
&&\frac{1}{\left( b-a\right) \left( d-c\right) (f-e)}\dint\limits_{a}^{b}%
\dint\limits_{c}^{d}\dint\limits_{e}^{f}f(x,y,z)dxdydz  \label{2.8} \\
&\leq &\frac{1}{4}\left[ \frac{1}{\left( b-a\right) \left( d-c\right) }%
\dint\limits_{a}^{b}\dint\limits_{c}^{d}f\left( x,y,e\right) dydx+\frac{1}{%
\left( b-a\right) \left( d-c\right) }\dint\limits_{a}^{b}\dint%
\limits_{c}^{d}f\left( x,y,f\right) dydx\right.  \notag \\
&&\left. +\frac{1}{\left( b-a\right) (f-e)}\dint\limits_{a}^{b}\dint%
\limits_{e}^{f}f(x,c,z)dzdx+\frac{1}{\left( b-a\right) (f-e)}%
\dint\limits_{a}^{b}\dint\limits_{e}^{f}f\left( x,d,z\right) dzdx\right] 
\notag \\
&\leq &\frac{\dint\limits_{a}^{b}f(x,c,e)dx+\dint\limits_{a}^{b}f(x,c,f)dx+%
\dint\limits_{a}^{b}f(x,d,e)dx+\dint\limits_{a}^{b}f(x,d,f)dx}{4(b-a)} 
\notag
\end{eqnarray}%
\begin{eqnarray}
&&\frac{1}{\left( b-a\right) \left( d-c\right) (f-e)}\dint\limits_{a}^{b}%
\dint\limits_{c}^{d}\dint\limits_{e}^{f}f(x,y,z)dxdydz  \label{2.9} \\
&\leq &\frac{1}{4}\left[ \frac{1}{\left( b-a\right) \left( d-c\right) }%
\dint\limits_{c}^{d}\dint\limits_{a}^{b}f\left( x,y,e\right) dxdy+\frac{1}{%
\left( b-a\right) \left( d-c\right) }\dint\limits_{c}^{d}\dint%
\limits_{a}^{b}f\left( x,y,f\right) dxdy\right.  \notag \\
&&\left. +\frac{1}{\left( d-c\right) (f-e)}\dint\limits_{c}^{d}\dint%
\limits_{e}^{f}f(a,y,z)dzdy+\frac{1}{\left( d-c\right) (f-e)}%
\dint\limits_{c}^{d}\dint\limits_{e}^{f}f\left( b,y,z\right) dzdy\right] 
\notag \\
&\leq &\frac{\dint\limits_{c}^{d}f(a,y,e)dy+\dint\limits_{c}^{d}f(a,y,f)dy+%
\dint\limits_{c}^{d}f(b,y,e)dy+\dint\limits_{c}^{d}f(b,y,f)dy}{4(d-c)} 
\notag
\end{eqnarray}%
\begin{eqnarray}
&&\frac{1}{\left( b-a\right) \left( d-c\right) (f-e)}\dint\limits_{a}^{b}%
\dint\limits_{c}^{d}\dint\limits_{e}^{f}f(x,y,z)dxdydz  \label{2.10} \\
&\leq &\frac{1}{4}\left[ \frac{1}{\left( b-a\right) (f-e)}%
\dint\limits_{e}^{f}\dint\limits_{a}^{b}f\left( x,c,z\right) dxdz+\frac{1}{%
\left( b-a\right) (f-e)}\dint\limits_{e}^{f}\dint\limits_{a}^{b}f\left(
x,d,z\right) dxdz\right.  \notag \\
&&\left. +\frac{1}{\left( d-c\right) (f-e)}\dint\limits_{e}^{f}\dint%
\limits_{c}^{d}f(a,y,z)dydz+\frac{1}{\left( d-c\right) (f-e)}%
\dint\limits_{e}^{f}\dint\limits_{c}^{d}f\left( b,y,z\right) dydz\right] 
\notag \\
&\leq &\frac{\dint\limits_{e}^{f}f(a,c,z)dz+\dint\limits_{e}^{f}f(a,d,z)dz+%
\dint\limits_{e}^{f}f(b,c,z)dz+\dint\limits_{e}^{f}f(b,d,z)dz}{4(f-e)} 
\notag
\end{eqnarray}%
Summing the inequalities (\ref{2.8})-(\ref{2.10}) and computing the right
hand sides of above inequalities, we get the third and last inequalities of (%
\ref{2.1})%
\begin{eqnarray*}
&&\frac{1}{\left( b-a\right) \left( d-c\right) (f-e)}\dint\limits_{a}^{b}%
\dint\limits_{c}^{d}\dint\limits_{e}^{f}f(x,y,z)dxdydz \\
&\leq &\frac{1}{6}\left[ \frac{1}{\left( b-a\right) \left( d-c\right) }%
\dint\limits_{a}^{b}\dint\limits_{c}^{d}f\left( x,y,e\right) dydx+\frac{1}{%
\left( b-a\right) \left( d-c\right) }\dint\limits_{a}^{b}\dint%
\limits_{c}^{d}f\left( x,y,f\right) dydx\right. \\
&&\left. +\frac{1}{\left( b-a\right) (f-e)}\dint\limits_{a}^{b}\dint%
\limits_{e}^{f}f(x,c,z)dzdx+\frac{1}{\left( b-a\right) (f-e)}%
\dint\limits_{a}^{b}\dint\limits_{e}^{f}f\left( x,d,z\right) dzdx\right. \\
&&\left. +\frac{1}{\left( d-c\right) (f-e)}\dint\limits_{c}^{d}\dint%
\limits_{e}^{f}f(a,y,z)dzdy+\frac{1}{\left( d-c\right) (f-e)}%
\dint\limits_{c}^{d}\dint\limits_{e}^{f}f\left( b,y,z\right) dzdy\right] \\
&\leq &\frac{f(a,c,e)+f(a,d,e)+f(b,c,e)+f(b,d,e)}{4} \\
&&+\frac{f(a,c,f)+f(a,d,f)+f(b,c,f)+f(b,d,f)}{4}
\end{eqnarray*}%
\ which completes the proof.
\end{proof}

Now, for a mapping $f:G=[a,b]\times \lbrack c,d]\times \lbrack
e,f]\rightarrow 
\mathbb{R}
$ as above, we can define the mapping $H:\left[ 0,1\right] ^{3}\rightarrow 
\mathbb{R}
,$%
\begin{eqnarray*}
H(t,s,r) &:&=\frac{1}{\left( b-a\right) \left( d-c\right) \left( f-e\right) }
\\
&&\times \dint\limits_{a}^{b}\dint\limits_{c}^{d}\dint\limits_{e}^{f}f\left(
tx+(1-t)\frac{a+b}{2},sy+(1-s)\frac{c+d}{2},rz+(1-r)\frac{e+f}{2}\right)
dxdydz.
\end{eqnarray*}%
The following theorem contains the properties of this mapping.

\begin{theorem}
Suppose that $f:G=[a,b]\times \lbrack c,d]\times \lbrack e,f]\rightarrow 
\mathbb{R}
$ is convex on $G,$ then:

(i) The mapping $H$ is convex on the co-ordinates on $\left[ 0,1\right]
^{3}. $

(ii) We have the bounds%
\begin{equation*}
\sup_{(t,s,r)\in \left[ 0,1\right] ^{3}}H(t,s,r)=\frac{1}{\left( b-a\right)
\left( d-c\right) \left( f-e\right) }\dint \dint\limits_{G}\dint
f(x,y,z)dydzdx=H(1,1,1)
\end{equation*}%
\begin{equation*}
\inf_{(t,s,r)\in \left[ 0,1\right] ^{3}}H(t,s,r)=f\left( \frac{a+b}{2},\frac{%
c+d}{2},\frac{e+f}{2}\right) =H(0,0,0)
\end{equation*}

(iii) The mapping $H$ is monotonic nondecreasing on the co-ordinates.
\end{theorem}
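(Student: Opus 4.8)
The plan is to carry out, in three variables, the reasoning behind Theorem 2 for the two-variable mapping, the one genuinely new point being that convexity of the one-variable restrictions of $H$ does \emph{not} by itself force monotonicity. For part \textbf{(i)} I would fix $s,r\in[0,1]$ and show $t\mapsto H(t,s,r)$ is convex on $[0,1]$, the coordinates $s$ and $r$ being symmetric. For $t_{1},t_{2}\in[0,1]$ and $\lambda\in[0,1]$, the identity $1-\lambda t_{1}-(1-\lambda)t_{2}=\lambda(1-t_{1})+(1-\lambda)(1-t_{2})$ rewrites the first argument of $f$ inside the integrand of $H(\lambda t_{1}+(1-\lambda)t_{2},s,r)$ as $\lambda\bigl(t_{1}x+(1-t_{1})\tfrac{a+b}{2}\bigr)+(1-\lambda)\bigl(t_{2}x+(1-t_{2})\tfrac{a+b}{2}\bigr)$, while the second and third arguments do not involve $t$. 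Applying the convexity of $f$ on $G$ (Definition 1) to the two points of $G$ so obtained, under the integral sign, and integrating over $G$, gives $H(\lambda t_{1}+(1-\lambda)t_{2},s,r)\le\lambda H(t_{1},s,r)+(1-\lambda)H(t_{2},s,r)$; this step is routine, and $s,r$ are handled the same way.

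The engine for parts \textbf{(ii)} and \textbf{(iii)} is the estimate $H(0,s,r)\le H(t,s,r)$ valid for all $t,s,r\in[0,1]$, together with its two permuted versions $H(t,0,r)\le H(t,s,r)$ and $H(t,s,0)\le H(t,s,r)$. To get the first, fix $(y,z)$, put $v=sy+(1-s)\tfrac{c+d}{2}$ and $w=rz+(1-r)\tfrac{e+f}{2}$, and use that $x'\mapsto f(x',v,w)$ is convex on $[a,b]$. Since the mean of $tx+(1-t)\tfrac{a+b}{2}$ over $x\in[a,b]$ is $\tfrac{a+b}{2}$, Jensen's inequality (equivalently the first Hermite--Hadamard inequality, via the substitution $u=tx+(1-t)\tfrac{a+b}{2}$) gives $f(\tfrac{a+b}{2},v,w)\le\tfrac{1}{b-a}\int_{a}^{b}f\bigl(tx+(1-t)\tfrac{a+b}{2},v,w\bigr)\,dx$; integrating over $(y,z)\in[c,d]\times[e,f]$ and dividing by $(d-c)(f-e)$ gives $H(0,s,r)\le H(t,s,r)$, the left side being $H(0,s,r)$ because the $t=0$ integrand is $x$-independent. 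The permuted versions follow by carrying out the $y$- or the $z$-integration first.

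Part \textbf{(iii)} is then short: with $s,r$ fixed and $0\le t_{1}<t_{2}\le1$, write $t_{1}=\tfrac{t_{1}}{t_{2}}\,t_{2}+\bigl(1-\tfrac{t_{1}}{t_{2}}\bigr)\cdot0$ and combine the convexity from (i) with $H(0,s,r)\le H(t_{2},s,r)$ to obtain $H(t_{1},s,r)\le\tfrac{t_{1}}{t_{2}}H(t_{2},s,r)+\bigl(1-\tfrac{t_{1}}{t_{2}}\bigr)H(t_{2},s,r)=H(t_{2},s,r)$, so $t\mapsto H(t,s,r)$ is nondecreasing; by symmetry $H$ is monotonic nondecreasing on the coordinates. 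Part \textbf{(ii)} follows by chaining: monotonicity gives $H(t,s,r)\le H(1,s,r)\le H(1,1,r)\le H(1,1,1)$ and $H(0,0,0)\le H(t,0,0)\le H(t,s,0)\le H(t,s,r)$ for all $(t,s,r)\in[0,1]^{3}$, while substituting $t=s=r=1$ (resp.\ $0$) into the definition of $H$ identifies $H(1,1,1)$ with $\tfrac{1}{(b-a)(d-c)(f-e)}\int_{a}^{b}\int_{c}^{d}\int_{e}^{f}f(x,y,z)\,dx\,dy\,dz$ and $H(0,0,0)$ with the constant $f\bigl(\tfrac{a+b}{2},\tfrac{c+d}{2},\tfrac{e+f}{2}\bigr)$; this gives the claimed supremum and infimum.

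The step I expect to be the main obstacle is exactly the one flagged at the start: a convex function on $[0,1]$ need not be nondecreasing, so part (iii) cannot be read off from part (i) alone. One must separately extract the one-dimensional comparison $H(0,s,r)\le H(t,s,r)$ from the first Hermite--Hadamard inequality, and only then does the barycentric decomposition $t_{1}=\tfrac{t_{1}}{t_{2}}t_{2}+\bigl(1-\tfrac{t_{1}}{t_{2}}\bigr)0$ close the argument. Everything else reduces to Fubini's theorem and the one-variable inequalities already recorded in Section 1.
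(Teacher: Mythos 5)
Your proposal is correct, and for parts (i) and (iii) it follows essentially the same path as the paper: (i) is the same rewriting of the first argument of $f$ as a convex combination (the paper uses a three-point combination $\alpha t_{1}+\beta t_{2}+\gamma t_{3}$ where you use two points, which is equivalent), and (iii) rests, exactly as in the paper, on the key estimate $H(t,s,r)\geq H(0,s,r)$ obtained from the Hermite--Hadamard/Jensen inequality, combined with convexity of the section $t\mapsto H(t,s,r)$ (your barycentric identity $t_{1}=\tfrac{t_{1}}{t_{2}}t_{2}+(1-\tfrac{t_{1}}{t_{2}})\cdot 0$ is the same as the paper's slope inequality $\tfrac{H(t_{2},s,r)-H(t_{1},s,r)}{t_{2}-t_{1}}\geq\tfrac{H(t_{1},s,r)-H(0,s,r)}{t_{1}}\geq 0$). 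Where you genuinely diverge is the supremum in (ii): the paper expands $H(t,s,r)$ by coordinate convexity into a convex combination, with weights $rst,\,rs(1-t),\dots,(1-r)(1-s)(1-t)$, of the eight ``corner'' integrals $H(\varepsilon_{1},\varepsilon_{2},\varepsilon_{3})$, $\varepsilon_{i}\in\{0,1\}$, and then bounds each corner by the full triple-integral mean using the chain of inequalities (2.1) from Theorem 3; you instead prove (iii) first and obtain the supremum by chaining $H(t,s,r)\leq H(1,s,r)\leq H(1,1,r)\leq H(1,1,1)$. Your route is more economical and self-contained --- it needs only the single Jensen-type lemma $H(0,s,r)\leq H(t,s,r)$ and its permutations, and does not invoke Theorem 3 at all --- whereas the paper's corner decomposition makes the connection to the refinement chain (2.1) explicit. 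Your closing remark is also well taken and is the one point the paper glosses over with ``by using Hadamard's inequality, one can see that'': convexity of the sections does not by itself yield monotonicity, so the estimate $H(0,s,r)\leq H(t,s,r)$ must be proved separately, exactly as you do.
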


\begin{proof}
(i) Fix $s,r\in \left[ 0,1\right] .$ For all $\alpha ,\beta ,\gamma \geq 0$
with $\alpha +\beta +\gamma =1$ and $t_{1},t_{2},t_{3}\in \left[ 0,1\right]
, $ we have%
\begin{eqnarray*}
&&H(\alpha t_{1}+\beta t_{2}+\gamma t_{3},s,r) \\
&=&\frac{1}{\left( b-a\right) \left( d-c\right) \left( f-e\right) } \\
&&\times \dint\limits_{a}^{b}\dint\limits_{c}^{d}\dint\limits_{e}^{f}f\left(
\left( \alpha t_{1}+\beta t_{2}+\gamma t_{3}\right) x+(1-\alpha t_{1}+\beta
t_{2}+\gamma t_{3})\frac{a+b}{2}\right. \\
&&\left. ,sy+(1-s)\frac{c+d}{2},rz+(1-r)\frac{e+f}{2}\right) dxdydz \\
&=&\frac{1}{\left( b-a\right) \left( d-c\right) \left( f-e\right) }%
\dint\limits_{a}^{b}\dint\limits_{c}^{d}\dint\limits_{e}^{f}f\left( \alpha
\left( t_{1}x+\left( 1-t_{1}\right) \frac{a+b}{2}\right) \right. \\
&&\beta \left( t_{2}x+\left( 1-t_{2}\right) \frac{a+b}{2}\right) +\gamma
\left( t_{3}x+\left( 1-t_{3}\right) \frac{a+b}{2}\right) , \\
&&\left. sy+(1-s)\frac{c+d}{2},rz+(1-r)\frac{e+f}{2}\right) dxdydz \\
&\leq &\alpha \frac{1}{\left( b-a\right) \left( d-c\right) \left( f-e\right) 
} \\
&&\times \dint\limits_{a}^{b}\dint\limits_{c}^{d}\dint\limits_{e}^{f}f\left(
t_{1}x+\left( 1-t_{1}\right) \frac{a+b}{2},sy+(1-s)\frac{c+d}{2},rz+(1-r)%
\frac{e+f}{2}\right) dxdydz \\
&&+\beta \frac{1}{\left( b-a\right) \left( d-c\right) \left( f-e\right) } \\
&&\times \dint\limits_{a}^{b}\dint\limits_{c}^{d}\dint\limits_{e}^{f}f\left(
t_{2}x+\left( 1-t_{2}\right) \frac{a+b}{2},sy+(1-s)\frac{c+d}{2},rz+(1-r)%
\frac{e+f}{2}\right) dxdydz \\
&&+\gamma \frac{1}{\left( b-a\right) \left( d-c\right) \left( f-e\right) } \\
&&\times \dint\limits_{a}^{b}\dint\limits_{c}^{d}\dint\limits_{e}^{f}f\left(
t_{3}x+\left( 1-t_{3}\right) \frac{a+b}{2},sy+(1-s)\frac{c+d}{2},rz+(1-r)%
\frac{e+f}{2}\right) dxdydz \\
&=&\alpha H(t_{1},s,r)+\beta H(t_{2},s,r)+\gamma H(t_{3},s,r).
\end{eqnarray*}%
By a similar way, one can see that%
\begin{equation*}
H(t,\alpha s_{1}+\beta s_{2}+\gamma s_{3},r)=\alpha H(t,s_{1},r)+\beta
H(t,s_{2},r)+\gamma H(t,s_{3},r)
\end{equation*}%
and%
\begin{equation*}
H(t,s,\alpha r_{1}+\beta r_{2}+\gamma r_{3})=\alpha H(t,s,r_{1})+\beta
H(t,s,r_{2})+\gamma H(t,s,r_{3})
\end{equation*}%
which completes the proof of (i).

(ii) Using convexity of $f$ on the triple co-ordinates on $G$ and Jensen's
integral inequality, we get;%
\begin{eqnarray*}
H(t,s,r) &\geq &\frac{1}{\left( b-a\right) }\dint\limits_{a}^{b}f\left(
tx+(1-t)\frac{a+b}{2},\frac{1}{\left( d-c\right) }\dint\limits_{c}^{d}\left[
sy+(1-s)\frac{c+d}{2}\right] dy\right. \\
&&\left. ,\frac{1}{\left( f-e\right) }\dint\limits_{e}^{f}\left[ rz+(1-r)%
\frac{e+f}{2}\right] dz\right) dx \\
&=&\frac{1}{\left( b-a\right) }\dint\limits_{a}^{b}f\left( tx+(1-t)\frac{a+b%
}{2},\frac{c+d}{2},\frac{e+f}{2}\right) dx \\
&\geq &f\left( \frac{1}{\left( b-a\right) }\dint\limits_{a}^{b}\left[
tx+(1-t)\frac{a+b}{2}\right] ,\frac{c+d}{2},\frac{e+f}{2}\right) dx \\
&=&f\left( \frac{a+b}{2},\frac{c+d}{2},\frac{e+f}{2}\right)
\end{eqnarray*}%
Since $H$ is convex on the co-ordinates, we have%
\begin{eqnarray*}
&&H(t,s,r) \\
&\leq &\frac{rst}{\left( b-a\right) \left( d-c\right) \left( f-e\right) }%
\dint\limits_{a}^{b}\dint\limits_{c}^{d}\dint\limits_{e}^{f}f(x,y,z)dxdydz+%
\frac{rs(1-t)}{\left( d-c\right) \left( f-e\right) }\dint\limits_{c}^{d}%
\dint\limits_{e}^{f}f(\frac{a+b}{2},y,z)dydz \\
&&+\frac{rt(1-s)}{\left( b-a\right) \left( f-e\right) }\dint\limits_{a}^{b}%
\dint\limits_{e}^{f}f(x,\frac{c+d}{2},z)dxdz+\frac{st(1-r)}{\left(
b-a\right) \left( d-c\right) }\dint\limits_{a}^{b}\dint\limits_{c}^{d}f(x,y,%
\frac{e+f}{2})dxdy \\
&&+\frac{t(1-s)(1-r)}{\left( b-a\right) }\dint\limits_{a}^{b}f(x,\frac{c+d}{2%
},\frac{e+f}{2})dx+\frac{s(1-t)(1-r)}{\left( d-c\right) }\dint%
\limits_{c}^{d}f(\frac{a+b}{2},y,\frac{e+f}{2})dy \\
&&+\frac{r(1-t)(1-s)}{\left( f-e\right) }\dint\limits_{e}^{f}f(\frac{a+b}{2},%
\frac{c+d}{2},z)dz+(1-r)(1-t)(1-s)f(\frac{a+b}{2},\frac{c+d}{2},\frac{e+f}{2}%
)
\end{eqnarray*}%
By using the inequalities of (\ref{2.1}), we obtain 
\begin{equation*}
H(t,s,r)\leq \frac{1}{\left( b-a\right) \left( d-c\right) \left( f-e\right) }%
\dint\limits_{a}^{b}\dint\limits_{c}^{d}\dint\limits_{e}^{f}f(x,y,z)dxdydz
\end{equation*}%
this completes the proof of (ii).

(iii) By using Hadamard's inequality, one can see that:%
\begin{equation*}
H(t,s,r)\geq H(0,s,r)
\end{equation*}%
for all $(t,s,r)\in \left[ 0,1\right] ^{3}.$ Let $0\leq t_{1}<t_{2}\leq 1.$
By the convexity of $H$, for all $(s,r)\in \lbrack 0,1]^{2},$ we have%
\begin{equation*}
\frac{H(t_{2},s,r)-H(t_{1},s,r)}{t_{2}-t_{1}}\geq \frac{H(t_{1},s,r)-H(0,s,r)%
}{t_{1}}\geq 0.
\end{equation*}%
Which completes the proof of (iii).
\end{proof}

\end{document}